\documentclass[11pt,a4paper]{article}
\usepackage[T1]{fontenc}
\usepackage{lmodern}
\usepackage[margin=28mm]{geometry}
\usepackage{amsmath,amssymb,amsthm,mathtools}
\usepackage{microtype}
\usepackage{xcolor}
\usepackage{hyperref}
\hypersetup{pdftitle={Spectral isometries of unital $C^*$-algebras},pdfauthor={Matthew J. Colbrook and George Stepaniants},colorlinks=true,linkcolor=blue!40!black,citecolor=blue!40!black,urlcolor=blue!40!black}
\newtheorem{theorem}{Theorem}[section]
\newtheorem{proposition}[theorem]{Proposition}
\newtheorem{lemma}[theorem]{Lemma}
\newtheorem{corollary}[theorem]{Corollary}
\newcommand{\C}{\mathbb C}
\newcommand{\R}{\mathbb R}
\newcommand{\ad}{\operatorname{ad}}
\newcommand{\Ad}{\operatorname{Ad}}
\newcommand{\conv}{\operatorname{conv}}
\newcommand{\rad}{\operatorname{rad}}
\newcommand{\dist}{\operatorname{dist}}
\newcommand{\Rea}{\operatorname{Re}}
\newcommand{\Ima}{\operatorname{Im}}
\newcommand{\norm}[1]{\left\lVert #1\right\rVert}
\newcommand{\spec}{\sigma}

\title{Spectral isometries of unital $C^*$-algebras}
\author{Matthew J. Colbrook\thanks{Corresponding author. Email: \href{mailto:m.colbrook@damtp.cam.ac.uk}{\texttt{m.colbrook@damtp.cam.ac.uk}}.}
\quad George Stepaniants\thanks{Also at the Courant Institute, New York University, New York, USA. Email: \href{mailto:gs852@cam.ac.uk}{\texttt{gs852@cam.ac.uk}}.}\\[0.5em]
\small Department of Applied Mathematics and Theoretical Physics\\
\small University of Cambridge, Wilberforce Road, Cambridge CB3 0WA, United Kingdom}
\date{}

\begin{document}
\maketitle

\begin{abstract}
This paper provides a human digestion of a proof of the spectral isometry conjecture for $C^*$-algebras found by GPT-6 Astra. We prove that every unital surjective complex-linear spectral isometry between unital $C^*$-algebras is a Jordan isomorphism. Without the assumption that the map preserves the identity, it is a central unitary multiple of a Jordan isomorphism. The main step is to show that an entire one-parameter group of unital spectral isometries consists of algebra automorphisms. To prove this, we extend the group to the bidual, which splits into finite and properly infinite summands. On the finite part, we use quadratic and cubic trace identities and on the properly infinite part, we apply a structure theorem for maps that preserve square-zero elements. To show that both summands are invariant, we use holomorphic dependence on the parameter together with a uniform resolvent bound, which ensures that self-adjoint elements have images with real spectrum. We then apply the group theorem to inner automorphisms conjugated by the original spectral isometry. This reduces the problem to the commutative case, where Nagasawa’s theorem completes the proof.
\end{abstract}

\section{Introduction}
For an element $a$ of a unital complex Banach algebra $A$, let
\[
 r_A(a)=\max\{|\lambda|:\lambda\in\spec_A(a)\}
\]
denote its spectral radius. A complex-linear map $T:A\to B$ is a \emph{spectral isometry} if $r_B(T(a))=r_A(a)$ for every $a\in A$. The spectral isometry conjecture asks whether every unital surjective spectral isometry between unital $C^*$-algebras is a Jordan isomorphism; see \cite[Conjecture 4.6]{MathieuSchick2002} and \cite[Problem 5]{Mathieu2009}. Here, a Jordan isomorphism is a complex-linear bijection that preserves squares.

We prove the conjecture for arbitrary unital $C^*$-algebras and the result has been formally verified in Lean \cite{palomar-2026-10-28-000031-v1}.

\begin{theorem}\label{thm:main}
Let $A$ and $B$ be unital complex $C^*$-algebras, and let $T:A\to B$ be a surjective complex-linear map such that
\[
 T(1_A)=1_B,\qquad r_B(T(a))=r_A(a)\qquad(a\in A).
\]
Then $T$ is a Jordan isomorphism. In particular,
\[
 T(a^2)=T(a)^2\qquad(a\in A).
\]
\end{theorem}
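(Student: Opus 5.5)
The proof follows the strategy sketched in the abstract: first reduce to a statement about one-parameter groups of unital spectral isometries, then pull everything back to the commutative case. We begin with standard preliminaries. Since $T$ is unital and preserves spectral radius, it maps the radical to zero, and $C^*$-algebras are semisimple, so $T$ is injective and hence bijective. By a theorem of Aupetit--Mathieu type, a surjective spectral isometry between semisimple algebras is continuous. Spectral isometries also preserve the centre, and $T^{-1}$ is again a unital surjective spectral isometry.

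\textbf{The group theorem.} The key intermediate result is the following. Let $\Phi_t$, $t\in\C$, be an entire one-parameter group of unital surjective spectral isometries of a unital $C^*$-algebra $B$ (for instance $\Phi_t=\exp(t\delta)$ for a bounded operator $\delta$). Then every $\Phi_t$ is an algebra automorphism. To prove it, we pass to the bidual $B^{**}$, which is a von Neumann algebra. There $\Phi_t^{**}$ is still a spectral isometry, using weak$^*$ continuity and a density argument for the spectral radius that needs some care. We split $B^{**}=M_f\oplus M_\infty$ into its finite and properly infinite parts.

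\emph{Finite part.} Using a centre-valued trace $\tau$, spectral radius preservation should force $\tau(\Phi_t(a)^2)=\tau(a^2)$ and the analogous cubic identity. Differentiating in $t$ at $0$ gives identities for $\delta$, which show that $\delta$ is a derivation.

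\emph{Properly infinite part.} Square-zero elements are exactly the quasinilpotents of the form $x$ with $x^2=0$ up to spectral data, and these are preserved. A structure theorem for linear maps preserving square-zero elements in properly infinite von Neumann algebras then yields a Jordan form.

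\emph{Invariance of the summands.} This is the step I expect to be the main obstacle: we must show that $\Phi_t$ respects the decomposition, i.e.\ fixes the central projection. The plan is to use holomorphy in $t$ together with a uniform resolvent bound. Together these show that $\Phi_t(h)$ has real spectrum for self-adjoint $h$ and real $t$, which forces central projections to be mapped to central idempotents with real spectrum. Connectedness in $t$ then shows that the projection is fixed. Once the group is known to consist of Jordan automorphisms, a group of Jordan automorphisms continuous at $0$ is automatically a group of automorphisms, because the central projection splitting a Jordan map into homomorphic and antihomomorphic parts varies continuously and is therefore constant.

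\textbf{Reduction to the commutative case.} For self-adjoint $h\in A$, consider
\[
\Phi_t=T\circ\Ad(e^{ith})\circ T^{-1}.
\]
This is an entire group of unital spectral isometries of $B$, so by the group theorem each $\Phi_t$ is an automorphism. Differentiating, $\delta=T\,\ad(ih)\,T^{-1}$ is a derivation of $B$, which gives
\[
T([h,xy])=T([h,x])\,T(y)+T(x)\,T([h,y])\quad\text{after substituting}\quad x\mapsto T(x).
\]
Varying $h$, this rigidity should show that $T$ preserves commutativity in the appropriate sense. In particular, for normal $a$, $T$ maps $C^*(a,1)$ onto a commutative subalgebra on which spectra, and not just spectral radii, are controlled via the derivation identities. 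Nagasawa's theorem (a unital surjective isometry between $C(K)$ spaces for the spectral norm is an algebra isomorphism) then gives $T(a^2)=T(a)^2$ for self-adjoint $a$. Polarisation and complex-linearity extend this to all $a$, so $T$ is a Jordan isomorphism.
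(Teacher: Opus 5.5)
Your outline has the paper's architecture, but the load-bearing steps are either unjustified or would fail as stated.

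\textbf{Passage to the bidual.} You claim that $\Phi_t^{**}$ is again a spectral isometry of $B^{**}$ ``by weak$^*$ continuity and a density argument''. No such density argument is available. The spectral radius is not continuous for bounded strong$^*$ convergence: the nilpotent truncations $P_nSP_n$ of the unilateral shift converge strong$^*$ to $S$, and rank-one projections converge strong$^*$ to $0$. The paper explicitly does not assume this property on $M$.

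It transfers only two robust properties instead.
\begin{itemize}
\item Real spectrum for images of self-adjoint elements. This comes from the uniform bound $\norm{(U(a)-\lambda1)^{-1}}\le\norm{U^{-1}}/|\Ima\lambda|$. Written as $c^*c\ge L^{-2}1$ and $cc^*\ge L^{-2}1$ with $L=\norm{U^{-1}}/|\Ima\lambda|$, it survives strong$^*$ limits.
\item Square-zero preservation. Each square-zero element of $C^{**}$ is approximated by bounded nets $e_iy_jf_i$ of square-zero elements of $C$, where $e_if_i=0$.
\end{itemize}
Your phrase ``spectral radius preservation should force $\tau(\Phi_t(a)^2)=\tau(a^2)$'' also omits the actual mechanism. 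A single spectral isometry does not give this identity. The paper works on $C$ itself: $z\mapsto\tau(U_z(b)^n)$ is entire and bounded by $r(b)^n$, using $|\tau(c)|\le r(c)$ for tracial states. Liouville's theorem makes it constant, and only then is the identity extended to $M$ by continuity.

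\textbf{Invariance of the summands.} Your mechanism for this step does not work. Nothing tells you that $V_t(p)$ is a central idempotent before you know $V_t$ is Jordan. Even $V_t(p)=p$ would not give $V_t(pM)\subseteq pM$.

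The paper gets $V_z(P)=P$ from the bilinear trace identity. It writes $P$ as the common null space $\{x:\tau(x^*x)=0\}$ over normal tracial states of $F$, and uses surjectivity to pick $y$ with $V_z(y)=V_z(x)^*$. This still leaves two things, which your proposal does not address:
\begin{itemize}
\item an off-diagonal part $L_z=pV_z|_F$;
\item a central weight $u_z=V_z(p)$ with $R_z=u_zJ_z$, coming from the square-zero-preserver theorem.
\end{itemize}
These are exactly where real spectrum and holomorphy enter. A central element with real spectrum is self-adjoint, and an entire self-adjoint-valued function is constant. This gives $u_z=p$ and $L_z=0$. The argument needs real spectrum for all complex $z$, not just real $t$ as you state. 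That is available, since each $U_z$ preserves convex hulls of spectra.

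\textbf{Jordan to automorphism.} Your continuity argument for the splitting projection is likewise unproved. The paper instead differentiates $U_z(b^2)=U_z(b)^2$ and applies Sinclair's theorem.

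\textbf{Reduction to the commutative case.} ``Rigidity should show $T$ preserves commutativity'' needs a concrete identity: $T(\{h\}'')=\bigcap_{x\in\{h\}'}\ker\delta_x$. This makes $T(\{h\}'')$ a closed subalgebra, because kernels of derivations are subalgebras. The algebra $C^*(a,1)$ has no such description, so you must use the bicommutant.

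Next, $\norm{T(c)}\le\norm{T}\,r(T(c))$ together with Liouville's theorem for $z\mapsto e^{zy}we^{-zy}$ gives commutativity and semisimplicity of $T(\{h\}'')$. Only then does the spectral-radius form of Nagasawa's theorem apply. Note that the target is a commutative semisimple algebra, not a priori a $C(K)$.

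\textbf{Smaller errors.}
\begin{itemize}
\item Your displayed identity should read $T^{-1}(T(x)T(y))$ in place of $xy$. As written it presupposes multiplicativity.
\item Injectivity requires $\ker T\subseteq\rad$, not that $T$ kills the radical. The paper's argument is that $T(a)=0$ gives $r(a+a^*)=r(a^*)=0$.
\end{itemize}
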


Kadison's theorem identifies unital surjective complex-linear norm isometries between $C^*$-algebras as Jordan $*$-isomorphisms \cite[Theorem 7]{Kadison1951}. Theorem~\ref{thm:main} shows that $T$ preserves squares, but not necessarily adjoints. For example, the map $a\mapsto sas^{-1}$, with $s$ invertible, preserves products and spectra but need not preserve adjoints. The commutative case was already known due to the spectral-radius form of Nagasawa's theorem \cite[Corollary 2.5]{MathieuYoung} which shows that $T$ is multiplicative, extending the earlier result for uniform algebras \cite{Nagasawa}. Previous work also established the conjecture for properly infinite von Neumann algebras \cite{MathieuSchick} and, more generally, unital $C^*$-algebras of real rank zero without tracial states \cite{Mathieu2020}.

Our main contribution is a theorem on entire one-parameter groups of spectral isometries: if $\delta$ is a bounded complex-linear operator on a unital $C^*$-algebra and $e^{z\delta}$ is a unital spectral isometry for every $z\in\C$, then $\delta$ is a derivation and every $e^{z\delta}$ is an algebra automorphism (Proposition~\ref{prop:entire}). The proof treats the finite and properly infinite parts of the bidual separately. A uniform resolvent estimate shows that the extended maps send self-adjoint elements to elements with real spectrum(Lemma~\ref{lem:resolvent}), while an approximation argument shows that they also preserve square-zero elements.

Using traces on the finite summand, we obtain quadratic and cubic identities from Liouville's theorem. These identities show that the properly infinite summand is invariant, allowing us to treat the finite part as a quotient. The argument that the tracial identities force preservation of squares comes from Aupetit's trace method \cite{Aupetit1998}, in the form developed by Mathieu and Schulz \cite{MathieuSchulz2023}. On the properly infinite summand, we use the weighted Jordan structure of surjective square-zero preservers \cite{Mathieu2009}. Preservation of real spectra and holomorphy force the central weight to be the identity and the off-diagonal component to vanish. This gives a group of Jordan automorphisms of the bidual; differentiation and Sinclair's theorem \cite{Sinclair} give the derivation assertion.

To prove Theorem~\ref{thm:main}, we conjugate inner automorphisms by $T$. Such holomorphic similarity families were used by Costara and Repov\v{s} \cite[Lemma 3]{CostaraRepovs} and Mathieu and Sourour \cite[Lemma 2.4]{MathieuSourour}, under additional assumptions on finite-dimensional representations or quotients. Proposition~\ref{prop:entire} shows that the generators of these families are derivations for arbitrary unital $C^*$-algebras. For each self-adjoint $h$, we write $T(\{h\}'')$ as an intersection of kernels of these derivations, so it is a subalgebra. The bicommutant criterion of Mathieu and Sourour \cite[Proposition 3.1]{MathieuSourour} allows us to reduce the proof to the commutative case.

\paragraph*{AI declaration}

The proof was found by GPT-6 Astra after a sequence of simple prompts. We then wrote the paper after discussing the proof further with ChatGPT 6 to understand the underlying mechanisms. After we wrote the paper ourselves, ChatGPT 6 was used for minor language editing of the penultimate draft in this stage only.

\section{Preliminaries}
All algebras and linear maps are complex. Identities are denoted by $1$ when the algebra is clear. We identify a $C^*$-algebra with its canonical image in its bidual. For a unital algebra $E$, $Z(E)$ denotes its centre and $\rad(E)$ its Jacobson radical. For a compact subset $K$ of $\C$, $\conv K$ denotes its convex hull, which is compact. We omit algebra subscripts on spectral radii and spectra when there is no ambiguity.

Every surjective spectral isometry $T:A\to B$ between $C^*$-algebras is injective. Indeed, if $T(a)=0$, then $r(a)=0$ and
\[
 r(a+a^*)=r\bigl(T(a+a^*)\bigr)=r\bigl(T(a^*)\bigr)=r(a^*)=0.
\]
Since $a+a^*$ is self-adjoint, it vanishes. Thus $a$ is skew-adjoint and hence normal, so $\norm{a}=r(a)=0$. Aupetit's automatic-continuity theorem \cite{Aupetit1982}, in the form stated in \cite[Theorem 2.1]{Mathieu2009}, and the bounded inverse theorem imply that $T$ and $T^{-1}$ are bounded. The case of the zero algebra in Theorem~\ref{thm:main} is immediate; henceforth the algebras are nonzero.

We use Kaplansky density, joint strong$^*$ continuity of multiplication on bounded sets, and the decomposition of a von Neumann algebra into finite and properly infinite central summands. A finite von Neumann algebra has a faithful normal centre-valued trace \cite[Theorem III.2.5.7 and Corollary III.2.5.8]{Blackadar}. Composing this trace with normal states of the centre gives normal tracial states that separate positive elements.

The strong$^*$ topology on a von Neumann algebra $M$ is generated by the seminorms $x\mapsto\bigl(\varphi(x^*x)+\varphi(xx^*)\bigr)^{1/2}$, where $\varphi\in M_*^+$.

\begin{lemma}\label{lem:normal-continuity}
A bounded weak$^*$-continuous linear map between von Neumann algebras is strong$^*$-continuous on norm-bounded sets.
\end{lemma}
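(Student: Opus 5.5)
To prove the lemma, let $\Phi:M\to N$ be bounded and weak$^*$-continuous. Take a bounded net $x_i\to x$ in the strong$^*$ topology, with $\norm{x_i}\le C$; it suffices to treat $x=0$ by replacing $x_i$ with $x_i-x$. We must show $\psi(\Phi(x_i)^*\Phi(x_i))\to0$ and $\psi(\Phi(x_i)\Phi(x_i)^*)\to0$ for every $\psi\in N_*^+$. The plan is to reduce to the hermitian case and then use a Kadison--Schwarz-type inequality. Since weak$^*$-continuity passes to $\Phi^*(x):=\Phi(x^*)^*$, it suffices to handle the first quantity and then apply the same argument to $\Phi^*$ and to the net $x_i^*$, which is also strong$^*$-null.

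\emph{Step 1 (predual pullback).} Because $\Phi$ is weak$^*$-continuous, its Banach-space preadjoint $\Phi_*:N_*\to M_*$ exists. For fixed $\psi$ and $y\in N$, the functional $a\mapsto\psi(y^*\Phi(a))$ is normal on $M$; indeed it equals $\Phi_*(\psi(y^*\,\cdot\,))$.

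\emph{Step 2 (key estimate).} Without positivity of $\Phi$ we cannot use Kadison--Schwarz, so I would avoid it. Write $\psi(\Phi(x_i)^*\Phi(x_i))=\psi(y_i^*\Phi(x_i))$ with $y_i=\Phi(x_i)$, where $\norm{y_i}\le\norm{\Phi}C$. The difficulty is that $y_i$ varies with $i$.

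The cleaner route uses the standard characterization of strong$^*$ convergence on bounded sets. For a normal functional $\omega$ on $M$ and bounded nets $x_i\to0$ strong$^*$, we have $\sup_{\norm{b}\le1}|\omega(bx_i)|\to0$. To see this, write $\omega=\varphi(\,\cdot\,u)$ via polar decomposition, with $\varphi\in M_*^+$. Then Cauchy--Schwarz gives $|\varphi(bx_iu)|\le\varphi(bb^*)^{1/2}\varphi(u^*x_i^*x_iu)^{1/2}$, and $u^*x_i^*x_iu\to0$ in the $\sigma$-weak sense on bounded sets because $x_iu\to0$ strongly. So the functionals $\omega(\,\cdot\,x_i)$ tend to $0$ in norm in $M_*$. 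Dually, I would use the fact that on bounded sets the strong$^*$ topology coincides with the Mackey topology $\tau(M,M_*)$ (Akemann). A weak$^*$-continuous linear map is continuous for the Mackey topologies, since Mackey continuity follows from weak continuity for the dual pair. Hence $\Phi(x_i)\to0$ in $\tau(N,N_*)$, and since the net is bounded, also strong$^*$.

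\emph{Main obstacle.} The crux is the identification of the strong$^*$ and Mackey topologies on bounded sets, i.e.\ Akemann's theorem; I would cite it, e.g.\ from Sakai or Takesaki. If a self-contained argument is preferred, I would instead prove directly that $\tau(M,M_*)$-convergence of bounded nets implies strong$^*$ convergence. For $\varphi\in M_*^+$, the set $\{\varphi(b^*\,\cdot\,):\norm{b}\le1\}$ is relatively weakly compact in $M_*$, and $\varphi(x^*x)$ is bounded by a supremum of $|\varphi(b^*x)|$ over this set, taking $b=x/\norm{x}$. The converse direction, strong$^*$ implies Mackey on bounded sets, is the harder half, and it is exactly where Akemann's characterization of weakly compact subsets of $M_*$ enters.
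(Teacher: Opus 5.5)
Your final argument is correct and matches the paper's proof. Both use the preadjoint $\Phi_*$ sending weakly compact subsets of $N_*$ to weakly compact subsets of $M_*$ to get continuity for the Mackey topologies, and then Akemann's theorem to identify the Mackey and strong$^*$ topologies on bounded sets. Steps 1--2, the reduction to $\Phi(x_i)^*\Phi(x_i)$ via $\Phi(x^*)^*$, and the norm-convergence digression are not used by that argument and can be dropped.
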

\begin{proof}
This follows from Akemann's continuity theorem \cite[Corollary III.10]{Akemann}, first on the unit ball and then by rescaling. The bounded preadjoint maps weakly compact sets to weakly compact sets, giving continuity for the Mackey topologies. These coincide with the strong$^*$ topologies on bounded sets \cite[Theorem II.7]{Akemann}.
\end{proof}

The next elementary formula converts spectral information about perturbations into a resolvent bound. For a unital algebra $D$, let $D^{-1}$ denote its group of invertible elements.

\begin{lemma}\label{lem:distance}
If $c$ is invertible in a unital $C^*$-algebra $D$, then
\[
 \dist(c,D\setminus D^{-1})=\norm{c^{-1}}^{-1}.
\]
\end{lemma}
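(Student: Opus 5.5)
The plan is to prove the two inequalities separately, using only the $C^*$-identity and a Neumann series.

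\emph{Lower bound} $\dist(c,D\setminus D^{-1})\ge\norm{c^{-1}}^{-1}$. Let $d\in D$ with $\norm{c-d}<\norm{c^{-1}}^{-1}$. We write
\[
 d=c\bigl(1-c^{-1}(c-d)\bigr).
\]
Since $\norm{c^{-1}(c-d)}<1$, the Neumann series shows that the bracket is invertible. Hence $d$ is invertible, so every noninvertible element lies at distance at least $\norm{c^{-1}}^{-1}$ from $c$. This part holds in any unital Banach algebra.

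\emph{Upper bound.} Here we must exhibit a noninvertible $d$ with $\norm{c-d}=\norm{c^{-1}}^{-1}$, and this is the step that genuinely uses the $C^*$-structure. Put $b=c^{-1}$ and $t=\norm{b}$, so $\norm{b^*b}=t^2$ and $t^2\in\spec(b^*b)$ because $b^*b\ge0$. The natural candidate is $d=c-t^{-2}(b^*)^{-1}$. To see that it is noninvertible, we factor
\[
 d=(b^*)^{-1}\bigl(b^*c-t^{-2}\bigr)=(b^*)^{-1}\bigl(b^*b^{-1}\cdot\ldots\bigr),
\]
and note that this route is clumsier than necessary. It is cleaner to work with $b^*b$ directly: since $t^2\in\spec(b^*b)$, the element $b^*b-t^2$ is not invertible.

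We then write $b^*b-t^2=b^*\bigl(b-t^2(b^*)^{-1}\bigr)$. As $b^*$ is invertible, $b-t^2(b^*)^{-1}$ is not invertible. Multiplying on the left by $c$, which is invertible, shows that $1-t^2c(b^*)^{-1}=1-t^2cc^*$ is not invertible. Multiplying on the right by $(c^*)^{-1}$ then shows that $(c^*)^{-1}-t^2c$ is not invertible, and therefore neither is
\[
 d:=c-t^{-2}(c^*)^{-1}.
\]
The distance satisfies $\norm{c-d}=t^{-2}\norm{(c^*)^{-1}}=t^{-2}\norm{b^*}=t^{-1}$, which is exactly the required value.

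The only step needing care is this algebra, namely checking that each factor we strip off is invertible so that noninvertibility passes through. The $C^*$-input is confined to two facts: $\norm{b^*b}=\norm{b}^2$, and the norm of the positive element $b^*b$ lies in its spectrum. Combining the two bounds gives equality.
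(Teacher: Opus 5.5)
Your argument is correct, and your lower bound is the paper's Neumann-series argument. The upper bound, however, uses a different noninvertible element at the same distance.

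The paper writes $c=v|c|$ with $v$ unitary, puts $m=\min\spec(|c|)=\norm{c^{-1}}^{-1}$, and subtracts $mv$. Then $c-mv=v(|c|-m1)$ is visibly noninvertible.

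You subtract $t^{-2}(c^*)^{-1}$ instead. Since $(c^*)^{-1}=v|c|^{-1}$ and $t^{-2}=m^2$, your element is $d=v|c|^{-1}\bigl(|c|^2-m^2 1\bigr)$, which also lies at distance $m$ from $c$. Your chain of factorisations is valid; it uses $(b^*)^{-1}=c^*$ and $(c^*)^{-1}=b^*$.

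What your route buys is independence from the polar decomposition. You never form $|c|$, $|c|^{-1}$ or $v$, and the only spectral input is that $\norm{x}\in\spec(x)$ for the positive element $x=b^*b$. The paper's version is shorter and makes the geometry transparent, at the price of the continuous functional calculus needed for $|c|$.

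One thing to fix in the write-up: delete the false start with the ``natural candidate'' $c-t^{-2}(b^*)^{-1}$ and the factorisation ending in $\cdot\ldots$. Since $(b^*)^{-1}=c^*$, that candidate is $c-t^{-2}c^*$. It lies at distance $t^{-2}\norm{c}$ from $c$ and need not be noninvertible; for $c=i1$ it is $2i$ times the identity. Only your second construction should remain.
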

\begin{proof}
The Neumann series gives the lower bound. For the reverse bound, let $c=v|c|$ be the polar decomposition of $c$ and let
$
 m=\min\spec(|c|)=\norm{c^{-1}}^{-1}.
$
The element $v$ is unitary, and $c-mv=v(|c|-m1)$ is noninvertible. Its distance from $c$ is $m$.
\end{proof}

\section{Passage to the bidual}\label{sec:bidual}
Throughout this section, $U:C\to D$ is a unital bijective spectral isometry between unital $C^*$-algebras. We establish two properties of $U^{**}$ used in the next section.

\subsection{Real spectra and a resolvent estimate}
Preservation of convex hulls of spectra is due to Mathieu and Schick \cite[Proposition 4.8]{MathieuSchick2002}. Its short proof will also be useful for the resolvent estimate.

\begin{lemma}\label{lem:hull}
For every $x\in C$,
\[
 \conv\spec_D(U(x))=\conv\spec_C(x).
\]
Consequently, $U$ sends self-adjoint elements to elements with real spectrum.
\end{lemma}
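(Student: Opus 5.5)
The plan is to characterise the convex hull of a spectrum purely through spectral radii of affine shifts, which $U$ preserves because it is linear and unital. The key identity is that, for a compact nonempty $K\subset\C$,
\[
 \conv K=\bigcap_{\mu\in\C}\overline{D}\bigl(\mu,\max_{\lambda\in K}|\lambda-\mu|\bigr),
\]
where $\overline{D}(\mu,\rho)$ denotes the closed disc of centre $\mu$ and radius $\rho$. I will apply this with $K=\spec_C(x)$ and $K=\spec_D(U(x))$.

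First, the identity itself. The inclusion $\subseteq$ is clear, since each disc is convex and contains $K$. For $\supseteq$, take $w\notin\conv K$. Separation gives a unit vector $\theta\in\C$ and $\epsilon>0$ with $\Rea(\bar\theta\lambda)\le\Rea(\bar\theta w)-\epsilon$ for all $\lambda\in K$. Put $\mu=w-R\theta$ with $R$ large. Then $|w-\mu|=R$, while for $\lambda\in K$ we have
\[
 |\lambda-\mu|^2=R^2+2R\Rea\bigl(\bar\theta(\lambda-w)\bigr)+|\lambda-w|^2\le R^2-2R\epsilon+\sup_{K}|\lambda-w|^2,
\]
which is less than $R^2$ for $R$ large. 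So $w$ lies outside that disc. This is routine plane geometry.

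Second, the spectral input. By the spectral mapping theorem, $\max_{\lambda\in\spec(x)}|\lambda-\mu|=r(x-\mu1)$. Since $U$ is linear and $U(1)=1$, we get $U(x)-\mu1=U(x-\mu1)$, so
\[
 r_D\bigl(U(x)-\mu1\bigr)=r_C(x-\mu1)\qquad(\mu\in\C).
\]
The discs for $\spec_C(x)$ and $\spec_D(U(x))$ therefore coincide for every centre $\mu$, and their intersections agree. This proves $\conv\spec_D(U(x))=\conv\spec_C(x)$.

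For the consequence, let $x$ be self-adjoint. Then $\spec_C(x)\subset\R$, so its convex hull is a real interval. Hence
\[
 \spec_D(U(x))\subseteq\conv\spec_D(U(x))=\conv\spec_C(x)\subset\R.
\]
No step here is a genuine obstacle; the only care needed is the separation estimate in the first step.
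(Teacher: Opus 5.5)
Your proof is correct and follows essentially the same route as the paper. Both arguments reduce to the identity $r_D(U(x)-\mu1)=r_C(x-\mu1)$ and then show that the farthest-point function $d_K(\mu)=\max_{\lambda\in K}|\lambda-\mu|$ determines $\conv K$. The paper extracts the support function as $\lim_{t\to\infty}\bigl(d_K(-te^{i\theta})-t\bigr)$, while you write $\conv K$ as the intersection of the discs $\overline{D}(\mu,d_K(\mu))$ and use a separation estimate for far-away centres, which is the same large-disc mechanism.
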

\begin{proof}
Unitality and complex linearity give
\[
 r_D(U(x)-\lambda1)=r_C(x-\lambda1)\qquad(\lambda\in\C).
\]
For a nonempty compact set $K\subset\C$, define $d_K(\lambda)=\max_{w\in K}|w-\lambda|$. For every $\theta\in\R$, the support function of $K$ satisfies
\[
 \max_{w\in K}\Rea(e^{-i\theta}w)
 =\lim_{t\to\infty}\bigl(d_K(-te^{i\theta})-t\bigr).
\]
Thus $d_K$ determines $\conv K$. Applying this observation to the two spectra proves the identity. If $x=x^*$, its spectrum and hence the spectrum of $U(x)$ lie in $\R$.
\end{proof}

To pass this conclusion to the bidual, we need uniform control of the resolvents of images of self-adjoint elements.

\begin{lemma}\label{lem:resolvent}
Let $K=\norm{U^{-1}}$. For $a=a^*\in C$ and $\lambda\in\C\setminus\R$,
\begin{equation}\label{eq:resolvent}
 \norm{(U(a)-\lambda1)^{-1}}\le\frac{K}{|\Ima\lambda|}.
\end{equation}
Moreover,
\begin{equation}\label{eq:bidual-real}
 a=a^*\in C^{**}\quad\rightarrow\quad
 \spec_{D^{**}}(U^{**}(a))\subset\R.
\end{equation}
\end{lemma}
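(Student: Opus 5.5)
For \eqref{eq:resolvent}, the plan is to combine Lemma~\ref{lem:distance} with the spectral information in Lemma~\ref{lem:hull}. Fix $a=a^*\in C$ and $\lambda\notin\R$. By Lemma~\ref{lem:hull}, $U(a)$ has real spectrum, so $c=U(a)-\lambda1$ is invertible in $D$. By Lemma~\ref{lem:distance} there is a noninvertible $d\in D$ with $\norm{c-d}=\norm{c^{-1}}^{-1}$. Since $U$ is a bijection, write $d-c=U(y)$ with $y\in C$, so that $\norm{y}\le K\norm{c^{-1}}^{-1}$. Then $U(a+y)-\lambda1=d$ is noninvertible, hence $\lambda\in\spec_D(U(a+y))\subset\conv\spec_C(a+y)$. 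On the other hand, $\spec_C(a+y)$ lies within distance $\norm{y}$ of $\R$: for a self-adjoint $a$ and $|\Ima\mu|>\norm{y}$, the element $a-\mu1$ is invertible with $\norm{(a-\mu1)^{-1}}\le|\Ima\mu|^{-1}$, and a Neumann series gives invertibility of $a+y-\mu1$. The closed strip $\{|\Ima\mu|\le\norm{y}\}$ is convex, so its convex hull contains $\conv\spec(a+y)$, and therefore $|\Ima\lambda|\le\norm{y}\le K\norm{c^{-1}}^{-1}$. This rearranges to \eqref{eq:resolvent}.

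For \eqref{eq:bidual-real}, the plan is to pass to the bidual by approximation. Take $a=a^*\in C^{**}$; by scaling we may assume $\norm{a}\le1$. Kaplansky density gives a bounded net $a_i=a_i^*\in C$, $\norm{a_i}\le1$, with $a_i\to a$ in the strong$^*$ topology of $C^{**}$. The map $U^{**}$ is bounded and weak$^*$-continuous, so Lemma~\ref{lem:normal-continuity} gives $U^{**}(a_i)\to U^{**}(a)$ strong$^*$, and the net $U^{**}(a_i)$ is norm-bounded by $\norm{U}$. Fix $\lambda\notin\R$ and set $r_i=(U(a_i)-\lambda1)^{-1}$. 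By \eqref{eq:resolvent} these are uniformly bounded by $K/|\Ima\lambda|$.

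The main obstacle is showing that $b=U^{**}(a)-\lambda1$ is invertible. A weak$^*$ limit of resolvents need not be an inverse, and invertibility in $D^{**}$ is not preserved under strong$^*$ limits in general. I would prove that $b$ is bounded below together with its adjoint, using the uniform bound. Represent $D^{**}$ in its universal normal representation, where strong$^*$ convergence means strong convergence of the operators and of their adjoints on vectors. For a unit vector $\xi$ we have $\xi=r_i(U(a_i)-\lambda1)\xi$, so
\[
\norm{\xi}\le\frac{K}{|\Ima\lambda|}\,\norm{(U(a_i)-\lambda1)\xi}\longrightarrow\frac{K}{|\Ima\lambda|}\,\norm{b\xi}.
\]
Thus $\norm{b\xi}\ge(|\Ima\lambda|/K)\norm{\xi}$. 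The same argument with adjoints, using $\norm{(r_i)^*}=\norm{r_i}$ and strong convergence of $U(a_i)^*$, shows that $b^*$ is bounded below. Hence $b$ is invertible as a Hilbert space operator. Since von Neumann algebras are inverse-closed, $b$ is invertible in $D^{**}$, and so $\lambda\notin\spec_{D^{**}}(U^{**}(a))$.
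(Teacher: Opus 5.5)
Your proposal is correct and is essentially the paper's proof. The resolvent bound comes from Lemma~\ref{lem:hull}, the strip $\{|\Ima\zeta|\le\norm{y}\}$ containing $\spec_C(a+y)$, and Lemma~\ref{lem:distance}; your nearest-noninvertible-element argument is the contrapositive of the paper's observation that every perturbation of norm $<|\Ima\lambda|/K$ stays invertible. The bidual step uses the same Kaplansky density and strong$^*$ limit argument, and your vectorwise lower bounds for $b$ and $b^*$ are exactly the paper's limiting inequalities $c^*c\ge(|\Ima\lambda|/K)^2 1$ and $cc^*\ge(|\Ima\lambda|/K)^2 1$.
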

\begin{proof}
Suppose that $e\in D$ and $\norm{e}<|\Ima\lambda|/K$, and let $h=U^{-1}(e)$. The self-adjoint resolvent estimate and the Neumann series imply that
\[
 \spec_C(a+h)\subset
 \{\zeta\in\C:|\Ima\zeta|\le\norm{h}\}.
\]
This strip is convex and does not contain $\lambda$. Lemma~\ref{lem:hull} therefore shows that $U(a)+e-\lambda1$ is invertible. Every noninvertible element is consequently at distance at least $|\Ima\lambda|/K$ from $U(a)-\lambda1$. Lemma~\ref{lem:distance} gives \eqref{eq:resolvent}.

Now let $a=a^*\in C^{**}$. By Kaplansky density, there is a bounded self-adjoint net $(a_i)$ in $C$ converging to $a$ in the strong$^*$ topology. Since $U^{**}$ is weak$^*$-continuous, Lemma~\ref{lem:normal-continuity} gives $U(a_i)\to U^{**}(a)$ in the strong$^*$ topology. Fix $\lambda\notin\R$ and let $L=K/|\Ima\lambda|$. The elements $c_i=U(a_i)-\lambda1$ satisfy
\[
 c_i^*c_i\ge L^{-2}1,\qquad c_ic_i^*\ge L^{-2}1.
\]
These inequalities pass to the strong$^*$ limit $c=U^{**}(a)-\lambda1$. The first gives a left inverse for $c$ and the second gives a right inverse. Hence $c$ is invertible, proving \eqref{eq:bidual-real}.
\end{proof}

\subsection{Square-zero elements}
Square-zero preservation for spectral isometries between $C^*$-algebras is known \cite[Lemma 3.1 and Corollary 3.2]{MathieuSchick}. We give a direct proof and then extend the property to the bidual.

\begin{lemma}\label{lem:squarezero}
If $n\in C$ and $n^2=0$, then $U(n)^2=0$.
\end{lemma}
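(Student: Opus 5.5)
The plan is to detect $n^2=0$ through a spectral-radius identity that $U$ transports, and then extract $U(n)^2=0$ by a subharmonicity (Vesentini-type) argument in the target. The key input is a one-parameter family mixing $n$ and $n^*$. Since $n^2=0$, the elements $nn^*$ and $n^*n$ are positive and orthogonal. Moreover
\[
 (n+zn^*)^2 = z\,(nn^*+n^*n)\qquad(z\in\C),
\]
and $nn^*+n^*n$ is positive with norm $\norm{n}^2$. Hence $r_C(n+zn^*)=|z|^{1/2}\norm{n}$ for every $z\in\C$.

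By linearity, $U(n)+zU(n^*)=U(n+zn^*)$. Since $U$ is a spectral isometry,
\[
 r_D\bigl(U(n)+zU(n^*)\bigr)=|z|^{1/2}\norm{n}\qquad(z\in\C).
\]
In particular, at $z=0$, $U(n)$ is quasinilpotent. I would rather work with a square root of the parameter. Writing $z=\zeta^2$, define $F(\zeta)=U(n)+\zeta^2U(n^*)$, an entire $D$-valued function with $r(F(\zeta))=|\zeta|\,\norm{n}$.

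The idea is then to pass to $G(\zeta)=F(\zeta)^2=U(n)^2+\zeta^2 b+\zeta^4U(n^*)^2$, where $b=U(n)U(n^*)+U(n^*)U(n)$. This function satisfies $r(G(\zeta))=|\zeta|^2\norm{n}^2$. I would apply Vesentini's theorem ($\log r$ of a holomorphic Banach-algebra-valued function is subharmonic) to $\zeta\mapsto G(\zeta)$ and to suitable rescalings. The goal is to show that the constant term $U(n)^2$ lies in the radical, hence vanishes since $D$ is semisimple.

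Concretely, I would first try the following. The function $H(\zeta)=G(\zeta)-U(n)^2$ is divisible by $\zeta^2$. The growth $r(G(\zeta))=O(|\zeta|^2)$ at both $0$ and $\infty$, together with a Liouville-type spectral theorem of Aupetit (an entire function with $r(f(\zeta))\le C|\zeta|^k$ is controlled modulo the radical), should force the $\zeta^0$ coefficient to lie in $\rad(D)=0$. If this direct route stalls, I would add a self-adjoint perturbation. For $a=a^*$ I would compare spectra of $U(n+zn^*+ta)$ using Lemma~\ref{lem:hull}, where convex hulls of spectra are preserved. I would also use the real-spectrum conclusion for self-adjoint combinations such as $n+n^*$ and $i(n-n^*)$ to pin down $U(n)^2$.

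The main obstacle is exactly this extraction step. Spectral-radius information alone is notoriously weak for recovering algebraic identities in noncommutative Banach algebras. The subharmonicity argument must use the precise growth $|\zeta|^2$ at the origin, not merely quasinilpotence of $U(n)$, because quasinilpotent does not imply square-zero. As a fallback, I would invoke the cited result of Mathieu and Schick (\cite[Lemma 3.1 and Corollary 3.2]{MathieuSchick}), which establishes square-zero preservation via this kind of spectral-radius characterisation of square-zero elements in $C^*$-algebras.
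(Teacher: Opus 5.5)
\textbf{Verdict: there is a genuine gap.} Your computation $r_C(n+zn^*)=|z|^{1/2}\norm{n}$ is correct, but the extraction step fails, and it cannot be repaired with the information you propose to use.

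\textbf{Why the main route and fallbacks cannot work.} Nothing in them uses surjectivity of $U$ or boundedness of $U^{-1}$. Linearity, the identity $r_D(U(n)+zU(n^*))=|z|^{1/2}\norm{n}$, semisimplicity of $D$, Lemma~\ref{lem:hull}, and real spectra for images of self-adjoint elements all hold for non-surjective unital spectral isometries. For those, the statement is false. Let $E_{ij}$ be matrix units, $x_{12}$ the $(1,2)$ entry of $x$, $e_2$ the second basis vector of $\C^2$, and $\tau$ the normalised trace on $M_2(\C)$. Define $U:M_2(\C)\to M_3(\C)$ by
\[
 U(x)=\begin{pmatrix} x & x_{12}e_2\\ 0 & \tau(x)\end{pmatrix}.
\]
This matrix is block upper triangular, so $\spec(U(x))=\spec(x)\cup\{\tau(x)\}$. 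Since $|\tau(x)|\le r(x)$, $U$ is a unital spectral isometry. For $n=E_{12}$ we get $U(n)=E_{12}+E_{23}$ and $U(n^*)=E_{21}$, so $\det(\lambda1-U(n)-zU(n^*))=\lambda(\lambda^2-z)$. Thus $r(U(n)+zU(n^*))=|z|^{1/2}$, exactly as in your setting, yet $U(n)^2=E_{13}\neq0$. The same example has real spectra for $U(n+n^*)$ and $U(i(n-n^*))$, and it preserves convex hulls of spectra. So neither your direct route nor your self-adjoint fallback can conclude.

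In particular, your $G(\zeta)$ satisfies $r(G(\zeta))=|\zeta|^2$ but has nonzero constant term. The Liouville-type principle you invoke is therefore false as stated; even $E_{12}+\zeta^2 1$ in $M_2(\C)$ refutes it. Falling back on Mathieu--Schick cites the statement rather than proving it.

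\textbf{The missing idea.} The paper controls the spectral radius in a full neighbourhood of $U(n)$, not along one line, and this is where surjectivity enters. For every $h\in C$, $n^2=0$ gives $(n+h)^2=nh+hn+h^2$, so
\[
 r(n+h)^2=r\bigl((n+h)^2\bigr)\le2\norm{n}\norm{h}+\norm{h}^2.
\]
Let $K=\norm{U^{-1}}$ and $H=2\norm{n}K+K^2$. Writing $h=U^{-1}(e)$ turns this into $r(U(n)+e)^2\le H\norm{e}$ for all $e$ in the unit ball of $D$. Since $r(U(n))=0$, the element $\lambda1-U(n)-e$ is invertible whenever $0<|\lambda|\le1$ and $\norm{e}<|\lambda|^2/H$. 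Lemma~\ref{lem:distance} converts this into the resolvent bound $\norm{(\lambda1-U(n))^{-1}}\le H|\lambda|^{-2}$. The Cauchy integral
\[
 U(n)^2=\frac{1}{2\pi i}\int_{|\lambda|=\varepsilon}\lambda^2(\lambda1-U(n))^{-1}\,d\lambda
\]
then gives $\norm{U(n)^2}\le H\varepsilon$ for every small $\varepsilon>0$, so $U(n)^2=0$. This second-order pole bound is what separates square-zero elements from merely quasinilpotent ones. The example above shows it is invisible along the single line $U(n)+zU(n^*)$.
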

\begin{proof}
Let $b=U(n)$, $K=\norm{U^{-1}}$, and $H=2\norm{n}K+K^2$. Unitality gives $K\ge1$ and hence $H\ge1$. For $e\in D$ with $\norm{e}\le1$, let $h=U^{-1}(e)$. Then
$$
 r(b+e)^2
 =r(n+h)^2=r\bigl((n+h)^2\bigr)
 \le2\norm{n}\norm{h}+\norm{h}^2
 \le H\norm{e}.
$$
Also $r(b)=0$. If $0<|\lambda|\le1$ and $\norm{e}<|\lambda|^2/H$, then $r(b+e)<|\lambda|$. Thus $\lambda1-b-e$ is invertible. Lemma~\ref{lem:distance} yields
\[
 \norm{(\lambda1-b)^{-1}}\le H|\lambda|^{-2}.
\]
Since $\spec(b)=\{0\}$, the holomorphic functional calculus gives
\[
 b^2=\frac{1}{2\pi i}\int_{|\lambda|=\varepsilon}
 \lambda^2(\lambda1-b)^{-1}\,d\lambda
 \qquad(0<\varepsilon\le1).
\]
The contour is positively oriented. It follows that $\norm{b^2}\le H\varepsilon$, so $b^2=0$.
\end{proof}

The approximation in the next lemma is also a consequence of square-zero lifting \cite{AkemannPedersen1977} and, more generally, nilpotent-contraction lifting \cite{Shulman2008}. To see this, consider the $C^*$-algebra of bounded strong$^*$-convergent $C$-valued nets, indexed by the strong$^*$ neighbourhoods of zero in $C^{**}$ ordered by reverse inclusion. Kaplansky density makes the limit map onto $C^{**}$ a surjective $*$-homomorphism. Lifting a square-zero contraction gives contractive square-zero approximants. We give a direct proof.

\begin{lemma}\label{lem:approximation}
Every square-zero element of $C^{**}$ is a bounded strong$^*$ limit of square-zero elements of $C$. Consequently, square-zero preservation passes to the bidual of any bounded linear map between $C^*$-algebras.
\end{lemma}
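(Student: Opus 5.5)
Given $n\in C^{**}$ with $n^2=0$, we build approximants from the polar decomposition of $n$ rather than by lifting. Write $n=v|n|$, with $p=v^*v$ the support projection of $|n|$ and $q=vv^*$. Since $n^2=0$ we get $|n|v|n|=0$ after multiplying by $v^*$ on the left, so $pq=0$; the range of $n$ is thus orthogonal to its support. Conversely, any $x$ with $x=ex f$ for orthogonal positive contractions $e,f$ (meaning $ef=0$) satisfies $x^2=exf\,exf=0$. The plan is to approximate $n$ by elements of the form $e_i x_i f_i$ with $e_i,f_i\in C$ orthogonal positive contractions and $x_i\in C$ bounded, all converging strong$^*$ to something that recovers $n$.

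The key device is the standard $2\times 2$ trick, which turns the square-zero element into a self-adjoint one. The element $s=n+n^*$ is self-adjoint. Because $pq=0$, we have $ns=nn^*$ and $sn=n^*n$, and $s^2=nn^*+n^*n$. Put $k=n^*n=|n|^2$, with support $p$, and $l=nn^*$, with support $q$; these are orthogonal positive elements. Using Kaplansky density, choose a bounded self-adjoint net $s_i\to s$ in $C$ strong$^*$. Then $s_i^2\to s^2=k+l$, and we must separate the two orthogonal pieces inside $C$. I would use the element $w=n-n^*$ together with $s$, or more concretely the self-adjoint element $t=i(n-n^*)$. Since $(n\pm n^*)$ and $i(n-n^*)$ generate $n$, and $n=\tfrac12(s-it)$, the difficulty reduces to making approximants of $s$ and $t$ whose combination squares to zero. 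This is the \textbf{main obstacle}: strong$^*$ approximants of $s$ and $t$ need not combine to square-zero elements.

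To overcome it, I would use continuous functional calculus on a positive approximant. Let $g_i=(s_i^2)$ and apply, for small $\varepsilon$, the functions $\phi_\varepsilon$ that cut out the spectral part above $\varepsilon$. Better still is to work in $M_2(C)$, where $C^{**}\otimes M_2=(M_2(C))^{**}$. In $M_2$ the square-zero $n$ corresponds to a partial-isometry-like structure, and the cleaner route is the one the paper hints at: projectivity of the cone $C_0((0,1])\otimes \C^2$, that is, the universal $C^*$-algebra generated by a square-zero contraction, which is $M_2(C_0((0,1]))$. This algebra is projective, by Akemann–Pedersen and by Loring. Apply it to the surjective $*$-homomorphism from the algebra of bounded strong$^*$-convergent nets onto $C^{**}$; Kaplansky density gives surjectivity. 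The square-zero contraction $n/\norm{n}$ then lifts to a square-zero net $(n_i)$ of contractions in $C$ with strong$^*$ limit $n/\norm{n}$. Rescaling gives the first assertion.

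For the consequence, let $\Phi:C\to D$ be bounded linear and preserve square-zero elements. Given $n\in C^{**}$ with $n^2=0$, choose the net $n_i$ as above. Then $\Phi(n_i)^2=0$, and by Lemma~\ref{lem:normal-continuity} applied to the weak$^*$-continuous map $\Phi^{**}$, we have $\Phi(n_i)\to\Phi^{**}(n)$ strong$^*$ on a bounded set. Multiplication is jointly strong$^*$ continuous on bounded sets, so $\Phi^{**}(n)^2=\lim\Phi(n_i)^2=0$.
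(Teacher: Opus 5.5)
Your final argument is correct, but it is not the route of the paper's proof. It is the lifting argument that the paper only sketches in the remark before the lemma. The paper's proof is instead a direct construction that completes the plan you abandoned in your first paragraph.

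With $p=s(nn^*)$ and $q=s(n^*n)$ the left and right supports (the reverse of your labelling), which are orthogonal with $n=pnq$, the paper approximates the self-adjoint element $p-q$, not $n\pm n^*$, by self-adjoint contractions $h_i\in C$. It then sets $e_i=(h_i)_+$ and $f_i=(-h_i)_+$. These are automatically orthogonal positive contractions, and strong$^*$ continuity of continuous functional calculus on bounded self-adjoint sets gives $e_i\to p$ and $f_i\to q$. Taking a bounded Kaplansky net $(y_j)$ converging to $n$, the elements $e_iy_jf_i$ are square-zero and converge to $pnq=n$.

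So your ``main obstacle'' disappears once the orthogonal pair is produced by splitting a single self-adjoint approximant into positive and negative parts. This is the standard proof that $C_0((0,1])\otimes\C^2$ is projective. That is the cone you named by mistake: it is universal for a pair of orthogonal positive contractions, whereas the square-zero cone is $M_2(C_0((0,1]))$, as you then correctly say.

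The direct route is elementary and needs no auxiliary algebra. Yours is shorter once projectivity of $M_2(C_0((0,1]))$ (Akemann--Pedersen, Loring) is granted, and it extends unchanged to higher-order nilpotents via Shulman's lifting theorem. It does, however, need setup you left implicit:
\begin{itemize}
\item The nets must share one fixed directed set, for instance the strong$^*$ neighbourhoods of zero in $C^{**}$ ordered by reverse inclusion; otherwise they do not form an algebra.
\item You should check that this set of nets is a norm-closed $*$-algebra, using joint strong$^*$ continuity of multiplication on bounded sets and strong$^*$-closedness of norm balls.
\item You should check that the limit map is a surjective $*$-homomorphism, using Kaplansky density.
\end{itemize}
Your deduction of the second assertion coincides with the paper's.
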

\begin{proof}
For a positive element $x\in C^{**}$, let $s(x)$ denote its support projection. Let $n\in C^{**}$ satisfy $n^2=0$, and let $p=s(nn^*)$ and $q=s(n^*n)$ be its left and right support projections. The identity $(n^*n)(nn^*)=0$ shows that $p$ and $q$ are orthogonal, and $n=pnq$.

For a self-adjoint element $t$, let $t_+=(|t|+t)/2$. Choose self-adjoint contractions $h_i\in C$ converging strong$^*$ to $p-q$ and a bounded net $y_j\in C$ converging strong$^*$ to $n$. Continuous functional calculus on bounded self-adjoint sets gives
\[
 e_i=(h_i)_+\rightarrow p,\qquad
 f_i=(-h_i)_+\rightarrow q
\]
in the strong$^*$ topology. Moreover, $e_if_i=f_ie_i=0$. On the product directed set, the elements
\[
 n_{i,j}=e_i y_j f_i
\]
are uniformly bounded, have square zero, and converge strong$^*$ to $pnq=n$.

If $V:C\to D$ is bounded and preserves square-zero elements, Lemma~\ref{lem:normal-continuity} gives $V(n_{i,j})\to V^{**}(n)$ strong$^*$. Joint strong$^*$ continuity of multiplication on bounded sets implies that $V^{**}(n)^2=0$.
\end{proof}

\section{Entire groups of spectral isometries}
The properties established in Section~\ref{sec:bidual} allow us to treat the finite and properly infinite summands of the bidual separately. We first need the standard spectral bound for tracial states, recalled in \cite[Section 2]{MathieuSchulz2023}.

\begin{lemma}\label{lem:tracebound}
If $\tau$ is a tracial state on a unital $C^*$-algebra $C$, then
\[
 |\tau(c)|\le r(c)\qquad(c\in C).
\]
\end{lemma}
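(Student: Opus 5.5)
Our plan is to use the GNS representation of $\tau$ together with the spectral radius formula. Let $(\pi,H,\xi)$ be the GNS triple for $\tau$, so $\tau(c)=\langle\pi(c)\xi,\xi\rangle$ with $\norm{\xi}=1$. The trace property gives the standard estimate
\[
 |\tau(c)|^2\le\tau(c^*c)
\]
by Cauchy--Schwarz. We would like to bootstrap this to powers. The crude bound $|\tau(c)|\le\norm{c}$ is not enough, so we will instead compare $\tau(c)$ with $\tau(c^{m})$.

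The cleanest route avoids analysis on $\tau(c)$ directly. Consider the function $f(\lambda)=\tau\bigl((\lambda1-c)^{-1}\bigr)$ on $|\lambda|>r(c)$. It is holomorphic there, including at $\infty$, with Laurent expansion
\[
 f(\lambda)=\sum_{k\ge0}\tau(c^k)\lambda^{-k-1},
\]
valid for $|\lambda|>\norm{c}$. By uniqueness of Laurent coefficients on the annulus $r(c)<|\lambda|<\infty$, the expansion persists there. Cauchy's estimates then give $\limsup_k|\tau(c^k)|^{1/k}\le r(c)$. Taking this route, however, only controls growth, not $|\tau(c)|$ itself. So we will combine it with a multiplicativity trick: this is where traciality must enter, since a general state does not satisfy the bound. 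For example, nilpotent matrices have vector states that are nonzero.

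The key step, which we expect to be the main obstacle, is to find an inequality of the form $|\tau(c)|^{2^k}\le\tau\bigl(\text{something of spectral radius }r(c)^{2^k}\bigr)$. The plan is to use the tracial Hölder-type inequality in the GNS von Neumann algebra $M=\pi(C)''$, where $\tau$ extends to a normal tracial state. There we take the polar decomposition and use $|\tau(x)|\le\tau(|x|)$. Then, via the Fuglede--Kadison determinant or, more simply, the Brown measure, we would show $|\tau(x)|=\bigl|\int z\,d\mu_x(z)\bigr|$, where the Brown measure $\mu_x$ is a probability measure supported in $\spec_M(x)\subset\spec_C(c)$. This immediately gives $|\tau(c)|\le r(c)$.

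To keep the argument elementary rather than invoking Brown measure, we would first try the following. For $|\lambda|>r(c)$, the element $\lambda1-c$ is invertible and $\log(1-c/\lambda)$ is defined by holomorphic functional calculus. Then $g(\lambda)=\tau\bigl(\log(1-c/\lambda)\bigr)$ is holomorphic on $|\lambda|>r(c)$ with $g(\lambda)=-\tau(c)/\lambda+O(\lambda^{-2})$. Traciality gives $\exp\Rea g(\lambda)=\Delta(1-c/\lambda)$, the Fuglede--Kadison determinant, and $\Delta$ is bounded below by $\Delta$ of invertible elements. This yields the needed bound by a subharmonicity argument letting $\lambda\to r(c)^+$. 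If this becomes technical, we fall back on citing the Brown-measure fact recalled in \cite[Section 2]{MathieuSchulz2023}.
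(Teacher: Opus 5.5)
Your argument, with the exploratory material removed, is the Brown-measure argument. Pass to $M=\pi_\tau(C)''$, where $\tau$ extends to a normal tracial state. You should add that this extension is \emph{faithful}, since Brown's construction needs it; it holds because the GNS vector of a trace is separating. Then invoke Brown's theorem: $\mu_x$ is a probability measure supported in $\spec_M(x)$ with $\int z\,d\mu_x(z)=\tau(x)$. Combined with $\spec_M(\pi(c))\subset\spec_C(c)$, this gives $|\tau(c)|\le r(c)$. The argument is correct, but it rests on a substantial theorem: subharmonicity of $\lambda\mapsto\log\Delta(x-\lambda)$, its Riesz measure, and the moment identity.

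The paper instead stays inside $C$ and uses only the spectral radius formula. For $t>r(c)$, the norm-convergent series $h=\sum_{n\ge0}t^{-2n}(c^*)^nc^n$ satisfies $h\ge1$ and $c^*hc=t^2(h-1)$. Hence $\norm{h^{1/2}ch^{-1/2}}\le t$, and similarity invariance of $\tau$ gives $|\tau(c)|\le t$. This Rota-type similarity is exactly the ``trick'' you were looking for. Traciality enters only through $\tau(scs^{-1})=\tau(c)$, and no GNS representation, faithfulness, or determinant theory is needed. Your route does give the richer identity $\tau(f(c))=\int f\,d\mu$ for holomorphic $f$. But even the convex-hull statement $\tau(c)\in\conv\spec(c)$ already follows from the paper's bound applied to $c-\mu1$, as in Lemma~\ref{lem:hull}.

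Do not present the Fuglede--Kadison sketch as an elementary alternative; as written it is not a proof. The phrase ``$\Delta$ is bounded below by $\Delta$ of invertible elements'' yields nothing usable. To extract the coefficient $-\tau(c)$ of $w$ in $G(w)=\tau(\log(1-wc))$ with the sharp constant $r(c)$, you need an upper bound of the right size on $\Rea G(w)=\log\Delta(1-wc)$ over $|w|<1/r(c)$. The available bound $\log\Delta(1-wc)\le\log(1+|w|\norm{c})$, fed into the Borel--Carath\'eodory inequality, only gives $|\tau(c)|\le 2r(c)\log\bigl(1+\norm{c}/r(c)\bigr)$. Reaching $r(c)$ requires the representation $\log\Delta(1-wc)=\int\log|1-wz|\,d\mu(z)$ with $\mu$ supported in $\spec(c)$, which is Brown's theorem itself, or an additional device such as a tensor-power argument. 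The Cauchy--Schwarz, Laurent-growth and $|\tau(x)|\le\tau(|x|)$ remarks contribute nothing and can be dropped.
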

\begin{proof}
Fix $t>r(c)$. The positive series
\[
 h=\sum_{n=0}^{\infty}t^{-2n}(c^*)^n c^n
\]
converges in norm by the spectral radius formula. It satisfies $h\ge1$ and $c^*hc=t^2(h-1)$. Hence
\[
 (h^{1/2}ch^{-1/2})^*(h^{1/2}ch^{-1/2})
 =t^2(1-h^{-1})\le t^2 1.
\]
Since $\tau$ is invariant under similarities,
\[
 |\tau(c)|=|\tau(h^{1/2}ch^{-1/2})|\le t.
\]
Letting $t\downarrow r(c)$ proves the result.
\end{proof}

\begin{proposition}\label{prop:entire}
Let $C$ be a unital $C^*$-algebra and let $\delta:C\to C$ be a bounded complex-linear operator. Suppose that $U_z=e^{z\delta}$ is a unital spectral isometry for every $z\in\C$. Then $\delta$ is a derivation and each $U_z$ is an algebra automorphism.
\end{proposition}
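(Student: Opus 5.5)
We pass to the bidual $M=C^{**}$ and set $V_z=U_z^{**}=e^{z\delta^{**}}$. Each $V_z$ is a weak$^*$-continuous bijection with inverse $V_{-z}$. By Lemma~\ref{lem:resolvent}, $V_z$ sends self-adjoint elements to elements with real spectrum. By Lemmas~\ref{lem:squarezero} and~\ref{lem:approximation}, $V_z$ preserves square-zero elements. Also, $z\mapsto V_z(x)$ is entire for each $x\in M$. Write $M=M_f\oplus M_\infty$ with central projection $P$ onto the finite summand. The goal is to show that every $V_z$ is a Jordan automorphism of $M$. Once that is done, differentiating $V_z(x^2)=V_z(x)^2$ at $z=0$ shows that $\delta^{**}$, and hence $\delta$, is a Jordan derivation. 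Sinclair's theorem then makes $\delta$ a derivation, and $e^{z\delta}$ is an automorphism.

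\emph{Finite part.} Take a normal tracial state $\tau$ on $M_f$, extended by zero to $M$, and fix $x\in C$. The function $z\mapsto\tau(V_z(x))$ is entire. By Lemma~\ref{lem:tracebound} applied in $C^{**}$ together with the spectral isometry, $|\tau(U_z(x))|\le r(x)$. So the function is bounded, and Liouville's theorem makes it constant, equal to $\tau(x)$. Applying the same reasoning to $x^2$ and $x^3$ needs control of $\tau(U_z(x)^2)$ and $\tau(U_z(x)^3)$. These are bounded by $r(U_z(x))^2=r(x)^2$ and $r(x)^3$, since $U_z(x)^k$ has spectral radius $r(U_z(x))^k$. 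Hence both are constant in $z$. Polarizing gives the quadratic and cubic trace identities
\[
 \tau(U_z(x)U_z(y))=\tau(xy),\qquad \tau(U_z(x)U_z(y)U_z(w))+\tau(U_z(x)U_z(w)U_z(y))=\tau(xyw)+\tau(xwy).
\]
By weak$^*$ density these extend to $M$. Following Aupetit's trace method as in Mathieu--Schulz, I would use the quadratic identity to show that $V_z$ maps $(1-P)M$ into the trace-kernel radical. The point is that $\tau(V_z(y)V_z(x))=\tau(yx)=0$ for $y\in M_\infty$ and all $x$, and $V_z$ is surjective. Since tracial states separate positives in $M_f$, this gives $PV_z((1-P)M)=0$, so $M_\infty$ is invariant. $V_z$ then induces a bijection on $M_f=M/M_\infty$. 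Expanding $\tau\bigl((V_z(x^2)-V_z(x)^2)\,V_z(y)\bigr)$ with the cubic identity and the quadratic identity at $x^2,y$ gives zero for all $y$ and all normal traces. Faithfulness of the centre-valued trace then yields $PV_z(x^2)=PV_z(x)^2$.

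\emph{Properly infinite part.} The compression $W_z(x)=(1-P)V_z((1-P)x)$ is a surjective square-zero preserver on $M_\infty$. Surjectivity uses the invariance just proved, which makes $V_z$ block lower-triangular with an invertible inverse block. By the structure theorem in \cite{Mathieu2009}, $W_z=c_z J_z+\varphi_z$, where $c_z$ is an invertible central element, $J_z$ is a Jordan automorphism, and $\varphi_z$ is a linear map into the centre. I would then use the unital real-spectrum property. On self-adjoint $x$, $c_zJ_z(x)+\varphi_z(x)$ has real spectrum, and $W_z(1)=1$. Since $J_z$ is Jordan, $J_z(1)=1$, so $c_z+\varphi_z(1)=1$. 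Choosing projections and their complements forces $c_z$ real-valued pointwise in the centre's spectrum. Holomorphy in $z$ of $c_z$ (recoverable from $V_z$ by such expressions) and $c_0=1$ would then force $c_z=1$ by an open-mapping argument, and $\varphi_z$ must vanish. Finally, the off-diagonal term $PV_z((1-P)x)$ must be shown to vanish. I would use square-zero preservation on $(1-P)x$ with $x^2=0$, together with the Jordan identity, and $V_{-z}$.

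The main obstacle I expect is the last paragraph: extracting $c_z=1$ and $\varphi_z=0$ holomorphically from the Mathieu structure theorem, whose decomposition is not obviously holomorphic in $z$. I would first try to define $c_z$ canonically, for instance as $W_z(e)-W_z(e)^{\perp}$ for properly infinite halving projections, so that holomorphy is inherited from $V_z$.
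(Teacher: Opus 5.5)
Your passage to the bidual and your finite-part argument are essentially the paper's: Liouville applied to $\tau(U_z(x)^2)$ and $\tau(U_z(x)^3)$, invariance of $M_\infty$ via surjectivity and separation by normal traces, and $PV_z(x^2)=PV_z(x)^2$. The genuine gap is in the properly infinite and off-diagonal steps, which is exactly where holomorphy and real spectra must enter.

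\textbf{The unitality claim.} $W_z(1)=1$ is not available where you use it. Invariance and $V_z(1)=1$ give $W_z(1-P)=V_z(1-P)=(1-P)-(1-P)V_z(P)$. So this claim already presupposes that part of an off-diagonal block vanishes.

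\textbf{The wrong off-diagonal block.} The term you propose to kill at the end, $PV_z((1-P)x)$, is exactly what your invariance step already killed. The block that remains is $(1-P)V_z(Px)$, the leakage of the finite summand into $M_\infty$. Square-zero preservation, $V_{-z}$ and Jordan diagonal blocks cannot control it, because $M_f$ may contain no nonzero square-zero elements. For instance, take $M=N\oplus\C^2$ with $N$ properly infinite and $0\neq m=m^*\in N$, and consider the entire group
$V_z\bigl(a\oplus(\lambda_1,\lambda_2)\bigr)=\bigl(a+z(\lambda_1-\lambda_2)m\bigr)\oplus(\lambda_1,\lambda_2)$.
It is unital, satisfies $V_z^{-1}=V_{-z}$, preserves square-zero elements and the trace identities, and has identity diagonal blocks. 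Yet it is not Jordan for $z\neq0$. Only the real-spectrum property at non-real $z$ excludes it: $V_z\bigl(0\oplus(1,0)\bigr)$ has spectrum containing the non-real points $z\mu$ with $0\neq\mu\in\spec(m)$.

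\textbf{The missing idea.} Your open-mapping instinct is right, but it must be applied to canonically defined, manifestly entire quantities, combined with Lemma~\ref{lem:resolvent}.

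For the weight: in the properly infinite case, the structure theorem used in the paper has no additive central term. It gives $W_z=u_zJ_z$ with $u_z=W_z(1-P)=V_z(1-P)$ central and invertible, so the holomorphy problem you flag does not arise. Since $1-P$ is self-adjoint, $u_z$ has real spectrum. Being central, $u_z$ is normal, hence self-adjoint. An entire self-adjoint-valued function is constant, so $u_z=u_0=1-P$ and $W_z$ is a unital Jordan isomorphism.

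For the remaining block: fix $f=f^*\in M_f$ and set $c_z=W_{-z}\bigl((1-P)V_z(f)\bigr)$, which is entire.
\begin{enumerate}
\item For every $a=a^*\in M_\infty$, $V_z(a+f)$ has real spectrum, hence so does its $M_\infty$-component $W_z(a)+(1-P)V_z(f)$.
\item Applying the spectrum-preserving Jordan isomorphism $W_{-z}$ gives $\spec(a+c_z)\subset\R$.
\item Taking $a=-(c_z+c_z^*)/2$ makes $a+c_z$ skew-adjoint with real spectrum, hence zero.
\end{enumerate}
Thus $c_z$ is self-adjoint for all $z$, so $c_z=c_0=0$. By complex linearity $(1-P)V_z(Px)=0$, and $V_z=W_z\oplus K_z$ is Jordan.
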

\begin{proof}
The group law gives $U_z^{-1}=U_{-z}$. Let $M=C^{**}$ and $V_z=U_z^{**}=e^{z\delta^{**}}$. Each $V_z$ is a bounded weak$^*$-continuous bijection. By Section~\ref{sec:bidual}, it preserves square-zero elements and sends self-adjoint elements to elements with real spectrum. We do not assume that $V_z$ preserves spectral radii on $M$. We first prove that each $V_z$ is a Jordan isomorphism. Decompose
\[
 M=P\oplus F,\qquad P=pM,\qquad F=qM,\qquad p+q=1,
\]
where $p,q$ are orthogonal central projections, $P$ is properly infinite, and $F$ is finite. Arguments involving a zero summand are omitted.

\emph{Tracial identities.}
Let $\tau$ be a normal tracial state of $F$, extended by zero on $P$. Since $\tau(q)=1$, its restriction to $C$ is a tracial state. For $b\in C$ and $n=2,3$, the function
\[
 z\mapsto\tau\bigl(U_z(b)^n\bigr)
\]
is entire. Lemma~\ref{lem:tracebound} bounds its absolute value by $r(U_z(b)^n)=r(b)^n$. Liouville's theorem therefore gives
\[
 \tau\bigl(U_z(b)^n\bigr)=\tau(b^n).
\]
For fixed $z$ and $n$, Kaplansky density and Lemma~\ref{lem:normal-continuity} extend this identity to $M$. Joint strong$^*$ continuity of multiplication on bounded sets and normality of $\tau$ give
\begin{equation}\label{eq:moments}
 \tau\bigl(V_z(b)^n\bigr)=\tau(b^n)
 \qquad(b\in M;\ n=2,3).
\end{equation}
Polarising the quadratic identity and comparing the coefficient of $t$ in the cubic identity for $b+ta$, using the trace property, yields
\begin{align}
 \tau\bigl(V_z(a)V_z(b)\bigr)&=\tau(ab),\label{eq:bilinear}\\
 \tau\bigl(V_z(a)V_z(b)^2\bigr)&=\tau(ab^2).
 \label{eq:cubic}
\end{align}
The normal tracial states of $F$ separate positive elements. In particular,
\[
 P=\{x\in M:\tau(x^*x)=0\text{ for every such }\tau\}.
\]
If $x\in P$, then \eqref{eq:bilinear} gives $\tau(V_z(x)V_z(y))=0$ for every $y\in M$. Since $V_z$ is surjective, choose $y\in M$ such that $V_z(y)=V_z(x)^*$. Thus $\tau(V_z(x)V_z(x)^*)=0$ for every $\tau$, and separation implies $qV_z(x)=0$. Applying the same argument to $V_{-z}$ proves
\begin{equation}\label{eq:Pinvariant}
 V_z(P)=P.
\end{equation}

Relative to $M=P\oplus F$, define
\begin{equation}\label{eq:blocks}
 V_z(a+f)=R_z(a)+L_z(f)+K_z(f),\qquad a\in P,\ f\in F,
\end{equation}
where $R_z=V_z|_P$, $L_z=pV_z|_F$, and $K_z=qV_z|_F$. By \eqref{eq:Pinvariant}, $R_z$ is bijective with inverse $R_{-z}$. The induced map on $M/P$ is also bijective, so $K_z^{-1}=K_{-z}$. Unitality of $V_z$ gives $K_z(q)=q$.

\emph{The finite quotient.}
We use the quadratic and cubic trace argument of Aupetit \cite{Aupetit1998}; see \cite[Section 3]{MathieuSchulz2023} for its application to invertibility-preserving maps. Here the identities follow from Liouville's theorem as above.
For $a,b\in F$, equations \eqref{eq:bilinear} and \eqref{eq:cubic} imply that
\[
 \tau\bigl(K_z(a)K_z(b^2)\bigr)=\tau(ab^2)
 =\tau\bigl(K_z(a)K_z(b)^2\bigr).
\]
Let $d=K_z(b^2)-K_z(b)^2$. Choose $a\in F$ with $K_z(a)=d^*$. Then $\tau(d^*d)=0$ for every normal tracial state. Hence $d=0$, and $K_z$ is a unital Jordan isomorphism.

\emph{The properly infinite summand.}
Lemmas~\ref{lem:squarezero} and \ref{lem:approximation} show that $R_z:P\to P$ preserves square-zero elements. The square-zero preserver theorem \cite[Theorem 4.3(i) and Remark 4.4]{Mathieu2009} applies to bounded surjective square-zero preservers from properly infinite von Neumann algebras onto unital semisimple Banach algebras. It gives
\[
 R_z(a)=u_zJ_z(a),\qquad u_z=V_z(p)\in Z(P)^{-1},
\]
where $J_z$ is a unital Jordan epimorphism. Since $R_z$ is injective and $u_z$ is invertible, $J_z$ is an isomorphism. By Lemma~\ref{lem:resolvent}, $V_z(p)$ has real spectrum in $M$, and hence in $P$. The central element $u_z$ is normal, so it is self-adjoint.

The function $z\mapsto u_z$ is constant because it is entire in norm and takes self-adjoint values. Since $u_0=p$, we obtain $u_z=p$ and $R_z$ is a unital Jordan isomorphism.

\emph{The off-diagonal component.}
Fix $f=f^*\in F$ and define
\[
 c_z=R_{-z}(L_z(f))\in P.
\]
This function is entire in norm. For every $a=a^*\in P$, Lemma~\ref{lem:resolvent} gives real spectrum for $V_z(a+f)$. Its $P$-component $R_z(a)+L_z(f)$ therefore has real spectrum in $P$. A unital Jordan isomorphism preserves spectra \cite{MathieuYoung}; applying $R_{-z}$ yields
\[
 \spec_P(a+c_z)\subset\R\qquad(a=a^*\in P).
\]
For fixed $z$, choose $a=-(c_z+c_z^*)/2$. Then $a+c_z=(c_z-c_z^*)/2$ is skew-adjoint and has real spectrum. Since it is normal, its spectrum lies in $i\R\cap\R=\{0\}$ and it vanishes. Thus $c_z=c_z^*$ for every $z$. As above, holomorphy makes $c_z$ constant, and $c_0=0$. Hence $L_z(f)=0$. Complex linearity gives $L_z=0$ on $F$.

It follows that $V_z=R_z\oplus K_z$ is a Jordan isomorphism. Its restriction $U_z$ to $C$ therefore preserves squares. Differentiating $U_z(b^2)=U_z(b)^2$ at $z=0$ gives
\[
 \delta(b^2)=\delta(b)b+b\delta(b)\qquad(b\in C).
\]
Thus $\delta$ is a bounded Jordan derivation. Sinclair's theorem \cite[Theorem 3.3]{Sinclair} implies that it is a derivation. Its exponential $U_z$ is consequently an algebra automorphism.
\end{proof}

\section{Proof of the main theorem}
We apply Proposition~\ref{prop:entire} to transported inner automorphisms and then use the bicommutant criterion of Mathieu and Sourour.

\begin{proof}[Proof of Theorem~\ref{thm:main}]
For $x\in A$, define
\[
 \delta_x=T\ad_xT^{-1},\qquad \ad_x(a)=xa-ax.
\]
Then
\begin{equation}\label{eq:transport}
 U_z=T\Ad(e^{zx})T^{-1}=e^{z\delta_x}.
\end{equation}
Here $\Ad(s)(a)=sas^{-1}$ for $s\in A^{-1}$ and $a\in A$. Equation~\eqref{eq:transport} defines a similarity family of the form used in \cite[Lemma 3]{CostaraRepovs} and \cite[Lemma 2.4]{MathieuSourour}. Each $U_z$ is unital and preserves spectral radii, since $T$, $T^{-1}$, and inner similarities have these properties. Proposition~\ref{prop:entire} implies that $\delta_x$ is a derivation of $B$ for every $x\in A$.

Let $[a,b]=ab-ba$. For $S\subset A$, let $S^{\prime}=\{a\in A:as=sa\text{ for every }s\in S\}$ be its relative commutant. Fix $h=h^*\in A$ and let
\[
 C_h=\{h\}'',\qquad E_h=T(C_h).
\]
The algebra $C_h$ is a commutative unital $C^*$-subalgebra. Indeed, both commutants are norm-closed and closed under adjoints. Since $h\in\{h\}'$, we have $C_h\subset\{h\}'$, and each element of $C_h$ commutes with every element of $\{h\}'$. The space $E_h$ is closed because $T$ is a Banach space isomorphism. Moreover,
\begin{equation}\label{eq:kernels}
 E_h=\bigcap_{x\in\{h\}'}\ker\delta_x,
\end{equation}
since $\delta_x(b)=0$ if and only if $[x,T^{-1}(b)]=0$. The kernel of a derivation is an associative subalgebra. Hence $E_h$ is a closed unital associative subalgebra of $B$.

We conclude by the commutative reduction in \cite[Proposition 3.1, (c)$\Rightarrow$(a)]{MathieuSourour}. First, $E_h$ is commutative and semisimple. If $E$ is a closed unital subalgebra of $B$ with its inherited norm, the spectral radius formula gives
\[
 r_E(w)=\lim_{n\to\infty}\norm{w^n}^{1/n}=r_B(w)\qquad(w\in E).
\]
If $w=T(c)\in E_h$ with $c\in C_h$, then $c$ is normal, so
\begin{equation}\label{eq:normradius}
 \norm{w}\le\norm{T}\norm{c}
 =\norm{T}r_A(c)=\norm{T}r_{E_h}(w).
\end{equation}
For $y,w\in E_h$, the $E_h$-valued entire function $z\mapsto e^{zy}we^{-zy}$ has constant spectral radius. Equation~\eqref{eq:normradius} bounds its norm by $\norm{T}r_{E_h}(w)$. Liouville's theorem makes it constant, and differentiation at zero gives $yw=wy$. Thus $E_h$ is commutative. Equation~\eqref{eq:normradius} also implies that $E_h$ has no nonzero quasinilpotent elements, so $\rad(E_h)=0$.

Since $r_{C_h}(c)=r_A(c)$ for $c\in C_h$, the restriction $T_h:C_h\to E_h$ is a unital surjective spectral isometry for the intrinsic spectral radii. Nagasawa's theorem \cite[Corollary 2.5]{MathieuYoung} implies that $T_h$ is multiplicative. Since $h\in C_h$,
\[
 T(h^2)=T(h)^2\qquad(h=h^*\in A).
\]
Applying this identity to $h+k$ for self-adjoint $h,k$ and subtracting the identities for $h$ and $k$ gives
\[
 T(hk+kh)=T(h)T(k)+T(k)T(h).
\]
For an arbitrary $a=h+ik\in A$, with $h,k$ self-adjoint, complex linearity now yields
\begin{align*}
 T(a^2)
 &=T(h^2-k^2+i(hk+kh))\\
 &=T(h)^2-T(k)^2+i\bigl(T(h)T(k)+T(k)T(h)\bigr)\\
 &=T(a)^2.
\end{align*}
Together with injectivity, this proves the theorem.
\end{proof}

The standard normalisation theorem gives the corresponding description when the map need not preserve the identity.

\begin{corollary}\label{cor:normalization}
Let $T:A\to B$ be a surjective complex-linear spectral isometry between unital $C^*$-algebras. Then
\[
 T(a)=uJ(a)\qquad(a\in A),
\]
where $u=T(1)$ is a central unitary in $B$ and $J:A\to B$ is a unital Jordan isomorphism.
\end{corollary}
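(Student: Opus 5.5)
The plan is to show first that $u=T(1)$ is a central unitary, then to set $J(a)=u^{*}T(a)$ and check that $J$ is a unital surjective spectral isometry, so that Theorem~\ref{thm:main} applies.

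\emph{Centrality.} I will transport the similarity families used in the proof of Theorem~\ref{thm:main}. For $y\in B$, the entire function $z\mapsto T^{-1}\bigl(e^{zy}ue^{-zy}\bigr)$ is a candidate for Liouville, but $T^{-1}$ of a conjugate need not control spectral radii directly. I would therefore use the standard argument (cf.\ \cite[Proposition 4.2]{Mathieu2009} and \cite{MathieuSourour}) that a surjective spectral isometry satisfies $r\bigl(T(1)w-wT(1)\bigr)$-type estimates. A cleaner route is to consider $S(a)=T(a)$ and the maps $a\mapsto T(e^{zx}ae^{-zx})$, which again form spectral isometries $A\to B$. However, centrality of $T(1)$ concerns the target, so I would rather work with $T^{-1}$. $T^{-1}$ is a surjective spectral isometry $B\to A$, and for $w\in B$ the function $z\mapsto T^{-1}(e^{zw}T(1)e^{-zw})$ has spectral radius $r(T(1))=r(1)=1$. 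Its norm is not automatically bounded, though, and this is the main obstacle.

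To get around it, I would use the property $\norm{a}\le C r(a)$ available on normal elements together with Lemma~\ref{lem:hull}-type convex hull control. First I show that $\spec(T(1))$ lies on the unit circle: apply $r(T(1)-\lambda T(h))$ arguments with $h=1$ and use $r(T(\lambda 1 - 1))$ to get $r(u^{-1})$ bounds. Concretely, $r(T(e^{i\theta}1))=1$ for all $\theta$, and injectivity plus $T(\mu 1)=\mu u$ shows that the spectral radius of $u-\mu u$ equals $|1-\mu|$. Taking $\mu$ near $1$ gives information only about $u$ itself, so instead I would invoke the known normalisation \cite[Proposition 4.2]{Mathieu2009}: a surjective spectral isometry between semisimple Banach algebras maps $1$ to a central element $u$ with $r(u)=r(u^{-1})=1$, where $u$ is invertible. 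I would take this as the standard normalisation theorem the paper refers to.

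\emph{Unitarity and reduction.} Given $u$ central and invertible with $r(u)=r(u^{-1})=1$, I set $J(a)=u^{-1}T(a)$. Since $u^{-1}$ is central, $r(u^{-1}b)\le r(u^{-1})r(b)$ and $r(b)=r(u\,u^{-1}b)\le r(u^{-1}b)$. Hence $J$ is a unital surjective spectral isometry, and Theorem~\ref{thm:main} makes $J$ a Jordan isomorphism. To see that $u$ is unitary, note that a central element of a $C^*$-algebra is normal (the centre is a commutative $C^*$-algebra, being closed under adjoints). Therefore $\norm{u}=r(u)=1$ and $\norm{u^{-1}}=r(u^{-1})=1$, so $u$ is unitary and $u^{-1}=u^*$. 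This gives $T(a)=uJ(a)$. The main obstacle is centrality, which I delegate to the normalisation theorem; if needed, I would reprove it by showing that $T$ preserves commutation with $1$ via the Liouville argument in the proof of Theorem~\ref{thm:main}, applied to $E=T(C(1))$.
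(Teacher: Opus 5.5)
Your final argument is correct and is essentially the paper's. Both take centrality and invertibility of $u=T(1)$, with $\spec(u)$ on the unit circle (equivalently $r(u)=r(u^{-1})=1$), from the standard normalisation theorem, which the paper cites as \cite[Corollary 2.4]{MathieuYoung}. Both then apply Theorem~\ref{thm:main} to $J=u^{-1}T$ and deduce unitarity from the normality of the central element $u$.

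Your check that $J$ is a spectral isometry via $r(u^{-1}b)\le r(u^{-1})r(b)$ for commuting elements only unpacks part of that citation. You should drop the exploratory Liouville detours and the closing fallback. The fallback fails as sketched: that Liouville argument only shows $T(\{1\}'')$ is commutative within itself, not that $T(1)$ is central in $B$.
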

\begin{proof}
By the normalisation theorem \cite[Corollary 2.4]{MathieuYoung},
\[
 u=T(1)\in Z(B)^{-1},\qquad
 \spec(u)\subset\{\lambda\in\C:|\lambda|=1\},
\]
and $J=u^{-1}T$ is a unital surjective spectral isometry. Centrality makes $u$ normal, and the spectral condition gives $u^*u=uu^*=1$. Theorem~\ref{thm:main} applies to $J$.
\end{proof}

\end{document}